\newtheorem{theorem}{Theorem}[section]
\newtheorem{lemma}[theorem]{Lemma}
\theoremstyle{definition}
\newtheorem{proposition}[theorem]{Proposition}
\newtheorem{corollary}[theorem]{Corollary}
\newtheorem{remark}[theorem]{Remark}
\theoremstyle{remark}
\newcommand{\bs}{\begin{split}}
\newcommand{\es}{\begin{split}}
\newcommand{\be}{\begin{equation}}
\newcommand{\ee}{\end{equation}}
\numberwithin{equation}{section}
\begin{document}

\title[An integral inequality on K\"{a}hler manifolds]
{An integral inequality for constant scalar curvature metrics on
K\"{a}hler manifolds}

\author{Ping Li}
\address{Department of Mathematics, Tongji University, Shanghai 200092, China}
\email{pingli@tongji.edu.cn}
\thanks{}

 \subjclass[2010]{32Q15, 53C55.}


\keywords{constant scalar curvature metric, Bochner-K\"{a}hler
metric, Chern number inequality}

\begin{abstract}
We present in this note a lower bound for the Calabi functional in a
given K\"{a}hler class. This yields an integral inequality for
constant scalar curvature metrics, which can be viewed as a refined
version of Yau's Chern number inequality.
\end{abstract}

\maketitle
\section{Introduction and main results}
Suppose $(M,J)$ is an $n$-dimensional compact complex manifold with
a K\"{a}hler metric $g$. This $g$ determines a positive
$(1,1)$-form, the K\"{a}hler form
$\omega(\cdot,\cdot):=\frac{1}{2\pi}g(J\cdot,\cdot)$, and vice
versa. Under local coordinates $(z^1,\ldots,z^n)$, we have
$$g=(g_{i\bar{j}}):=
\big(g(\frac{\partial}{\partial z^i},\frac{\partial}{\partial
\bar{z^j}})\big),\qquad
\omega=\frac{\sqrt{-1}}{2\pi}g_{i\bar{j}}\text{d}z^i\wedge\text{d}\bar{z^j}.$$

Here and throughout the paper we use the Einstein convention for
various summations. Since K\"{a}hler metrics $g$ are one-to-one
correspondence to their K\"{a}hler forms $\omega$, hereafter we
don't distinguish them. Let $[\omega]\in H^{1,1}(M,\mathbb{R})$ be a
K\"{a}hler class and denote the set of all K\"{a}hler forms in
$[\omega]$ by $[\omega]^{+}$. In \cite{Ca}, Calabi introduced the
following functional, now called the Calabi functional:
$$[\omega]^{+}\rightarrow\mathbb{R},\qquad \omega\mapsto
\text{Ca}(\omega):=\int_Ms^2(\omega)\omega^n,$$

where $s(\omega):=2g^{i\bar{j}}r_{i\bar{j}}$ is the (Riemannian)
scalar curvature of the metric $\omega$. Here
$(g^{j\bar{i}}):=(g_{i\bar{j}})^{-1}$ and $r_{i\bar{j}}$ are the
component functions of
 $$\text{Ric}(\omega)=\frac{\sqrt{-1}}{2\pi}
r_{i\bar{j}}\text{d}z^i\wedge\text{d}\bar{z^j}:=
-\frac{\sqrt{-1}}{2\pi}\partial\bar{\partial}\text{log}\big(
\text{det}(g_{i\bar{j}})\big),$$

the Ricci form of $\omega$ which represents the first Chern class of
$M$.

Calabi proposed finding critical points of this functional as
candidates for canonical metrics, which are called extremal
K\"{a}hler metrics and can be viewed as a generalization of the
notion of constant scalar curvature K\"{a}hler (``cscK" for short)
metrics. It turns out that
$$\text{inf}_{\omega\in[\omega]^{+}}\text{Ca}(\omega),$$

 the greatest
lower bound of $\text{Ca}(\omega)$ in the K\"{a}hler class
$[\omega]$, is deeply related to the difficult open problem of
relating existence of cscK metrics in $[\omega]^{+}$ to
algebro-geometric stability (\cite{Do}, \cite{Ch}). However, aside
from this deep relationship, there is a rather trivial lower bound
on this functional involving only the cohomology class $[\omega]$
and the first Chern class $c_1$ due to the well-known fact that
$s(\omega)\omega^n=2nc_1(\omega)\wedge\omega^{n-1}$:
\be\label{inequalitycsc}\int_Ms^2(\omega)\omega^n\geq
\frac{\big(\int_Ms(\omega)\omega^n\big)^2}{\int_M\omega^n}=\frac{(2n
c_1[\omega]^{n-1})^2}{[\omega]^n},\ee

 where the equality holds if and only if $\omega$ is a cscK metric.

Recall that the K\"{a}hler curvature tensor $R$ of a K\"{a}hler
metric $\omega$ has an orthogonal decomposition under the $L^2$
norm, $R=S+P+B$, where $S$ is the scalar part, $P$ is the traceless
Ricci part, and $B$ is the Bochner curvature tensor. We call
$\omega$ a \emph{Bochner-K\"{a}hler} (``B-K" for short) metric if
$B\equiv0$. The metric $\omega$ is Einstein if and only if
$P\equiv0$. $(M,\omega)$ is a complex space form, i.e., $\omega$ has
constant holomorphic sectional curvature, if and only if
$P=B\equiv0$.

It is well-known via the Chern-Weil theory that the two integrals
\be\label{ingegral}\int_Mc_1^2(\omega)\wedge\omega^{n-2}\qquad
\text{and} \qquad\int_Mc_2(\omega)\wedge\omega^{n-2},\ee

where $c_1(\omega)$ \big($=\text{Ric}(\omega)$\big) and
$c_2(\omega)$ are the first two Chern forms of $\omega$, can be
expressed in terms of the $L^2$ norms of the above-mentioned
tensors. Apte should be the first one who derived the expression for
$c_2(\omega)\wedge\omega^{n-2}$ in \cite{Ap}. These two expressions
have many important related applications. For instance, together
with the Aubin-Yau theorem on K\"{a}hler-Einstein (``K-E" for short)
metrics with negative scalar curvature and the Calabi-Yau theorem on
Ricci-flat K\"{a}hler metrics, they can be led to Yau's remarkable
Chern number inequality (\cite{yau}). In \cite[p. 80, (2.80),
(2.80a)]{Be} of Besse's highly influential book, Apte's formula was
refined in terms of three terms: the scalar curvature $s(\omega)$
and the squared norms of the traceless tensor
$\tilde{\text{R}}\text{ic}(\omega):=\text{Ric}(\omega)-\frac{s(\omega)}{2n}\omega$,
and the Bochner curvature tensor $B$. In \cite[p. 80]{Be} these are
denoted by the symbols $s$, $\rho_0$ and $B_0$ respectively. Then
they use this expression and that of
$c_1^2(\omega)\wedge\omega^{n-2}$ to deduce (\cite[p. 80,
(2.82a)]{Be}) the expression of
$$\int_M\big(2(n+1)c_2(\omega)-nc_1^2(\omega)\big)\wedge\omega^{n-2},$$
which was applied in turn to deduce Yau's Chern number inequality in
\cite[p. 325]{Be}.

Here we would like to point out that the coefficient before the
traceless part $|\rho_0|^2$ in \cite[(2.80), (2.80a)]{Be} is
\emph{incorrect}. According to our detailed calculations (see
Section \ref{section3.1}), it should be $-\frac{2m}{m+2}$ rather
than $-\frac{2(m-1)}{m}$ (they denote by $m$ the complex dimension
of the K\"{a}hler manifold). Accordingly, the coefficient before
$|\rho_0|^2$ in \cite[(2.82a)]{Be} should be $-(1-\frac{2}{m+2})$
rather than $-(1-\frac{2}{m^2})$. However, this inaccuracy of the
coefficient before $|\rho_0|^2$ \emph{does not affect} the
above-mentioned application as the condition of $\omega$ being
K\"{a}hler-Einstein requires that $|\rho_0|=0$. As a byproduct of
correcting this coefficient, which will be done in details in
Section \ref{section3.1}, we will have a lower bound on the Calabi
functional as follows.

\begin{proposition}\label{proplowerbound}
Suppose $(M,\omega)$ is an $n$-dimensional compact K\"{a}hler
manifold with a K\"{a}hler metric $\omega$. Then we have
\be\label{inequalitybochner}
\int_Ms^2(\omega)\omega^n\geq8(n+1)\big(nc_1^2[\omega]^{n-2}-(n+2)c_2[\omega]^{n-2}\big),\ee

and the equality holds if and only if $\omega$ is a B-K metric.
\end{proposition}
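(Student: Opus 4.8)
The plan is to express the two Chern-number integrals $\int_M c_1^2(\omega)\wedge\omega^{n-2}$ and $\int_M c_2(\omega)\wedge\omega^{n-2}$ in terms of pointwise $L^2$-norms of the curvature pieces, and then take the right linear combination so that the Bochner part survives with a definite sign. Concretely, using the Chern-Weil representatives and the orthogonal decomposition $R=S+P+B$ of the Kähler curvature tensor, one writes, at a point, $c_1^2\wedge\omega^{n-2}$ and $c_2\wedge\omega^{n-2}$ as (universal-constant) combinations of $s^2$, $|\tilde{\mathrm{R}}\mathrm{ic}|^2$ (equivalently $|P|^2$), and $|B|^2$, integrated against $\omega^n$. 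This is precisely the computation carried out in Section~\ref{section3.1}, correcting the coefficient $-\frac{2n}{n+2}$ in front of $|\rho_0|^2$ in Besse's formula. Granting those two formulas, form the combination
\[
nc_1^2[\omega]^{n-2}-(n+2)c_2[\omega]^{n-2}
=\int_M\Big(\alpha\, s^2 + \beta\, |\tilde{\mathrm{R}}\mathrm{ic}|^2 + \gamma\, |B|^2\Big)\,\omega^n
\]
for explicit universal constants $\alpha,\beta,\gamma$ depending only on $n$; the claim is that with this particular coefficient pair $(n,-(n+2))$ the traceless-Ricci term drops out, i.e. $\beta=0$, and $\gamma<0$, $\alpha=\frac{1}{8(n+1)}$.

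First I would recall/derive the Chern-Weil pointwise identities. Writing $\rho=c_1(\omega)$ for the Ricci form, $c_1^2\wedge\omega^{n-2}$ expands via $\rho^2\wedge\omega^{n-2}$ into $s^2$ and $|\rho_0|^2$ terms by the standard Lefschetz-type algebra on $(1,1)$-forms (no Weyl/Bochner contribution appears, since $c_1^2$ only sees the Ricci tensor). For $c_2\wedge\omega^{n-2}$ one uses Apte's computation, as corrected: it involves $s^2$, $|\rho_0|^2$ and $|B|^2$. Substituting the K\"ahler identities relating $|\rho_0|^2$ to $|P|^2$ and $|\tilde{\mathrm{R}}\mathrm{ic}|^2$, one obtains both integrals in the common basis $\{s^2, |P|^2, |B|^2\}$. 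Then the linear algebra of eliminating the $|P|^2$-coefficient pins down the ratio $n:(n+2)$ and produces the stated inequality, since $\int_M s^2\,\omega^n\ge 0$ trivially and the remaining term is $-(\text{positive})\cdot\int_M|B|^2\,\omega^n\le 0$; hence $\int_M s^2\,\omega^n \ge 8(n+1)\big(nc_1^2[\omega]^{n-2}-(n+2)c_2[\omega]^{n-2}\big)$.

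For the equality case, note that after the elimination the inequality reads $\int_M s^2\,\omega^n = 8(n+1)\big(\cdots\big) + (\text{positive constant})\int_M |B|^2\,\omega^n$, so equality holds exactly when $\int_M |B|^2\,\omega^n = 0$, i.e. $B\equiv 0$, which is the definition of a Bochner-K\"ahler metric. The main obstacle is purely computational: getting the universal constants in Apte's/Besse's formula right — in particular verifying that the coefficient of $|P|^2$ is indeed $-\frac{2n}{n+2}$ and not the value in \cite{Be}, since it is exactly this correction that makes the $|P|^2$ terms cancel in the combination $nc_1^2-(n+2)c_2$. I would handle this by working in a unitary frame diagonalizing the Ricci form, carefully tracking the combinatorial factors coming from $\omega^{n-2}/(n-2)!$ and the normalization $\omega=\frac{\sqrt{-1}}{2\pi}g_{i\bar j}dz^i\wedge d\bar z^j$, and cross-checking against the known Chern-number formula on complex surfaces ($n=2$), where the identity specializes to a classical relation among $\int s^2$, $\int|\tilde{\mathrm{R}}\mathrm{ic}|^2$, $\int|B|^2$ and $c_1^2$, $c_2$.
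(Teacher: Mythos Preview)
Your proposal is correct and follows essentially the same route as the paper: derive the integral formulas for $c_1^2[\omega]^{n-2}$ and $c_2[\omega]^{n-2}$ in the basis $\{s^2,\,|\tilde{\mathrm{R}}\mathrm{ic}|^2,\,|B|^2\}$ (via Apte's identities together with the norm relations $|S|^2=\tfrac{s^2}{2n(n+1)}$ and $|P|^2=\tfrac{4}{n+2}|\tilde{\mathrm{R}}\mathrm{ic}|^2$), then take the combination $nc_1^2-(n+2)c_2$ so that the traceless-Ricci term cancels, leaving $\int_M\tfrac{s^2}{8(n+1)}\omega^n-\tfrac{n+2}{2n(n-1)}\int_M|B|^2\omega^n$. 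The paper's proof is exactly this, with the key computational point being the coefficient $\tfrac{4}{n+2}$ in $|P|^2$ that you correctly flag as the crux.
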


\begin{remark}\label{remark1}~
\begin{enumerate}
\item
As we have commented above, the inaccuracy of the coefficient before
$|\rho_0|^2$ in \cite[(2.80a)]{Be} does not affect its deduction to
Yau's Chern number inequality. But the correctness of this
coefficient is crucial to our Proposition \ref{proplowerbound} and
the subsequent Theorem \ref{theorem}.

\item
A direct corollary of Proposition \ref{proplowerbound} is that a B-K
metric must be an extremal metric, which was proved by Matsumoto in
\cite{Ma}.

\item
Clearly the lower bound in (\ref{inequalitybochner}) makes no sense
unless
$$nc_1^2[\omega]^{n-2}>(n+2)c_2[\omega]^{n-2}.$$
Even if this holds, one may ask that whether in some cases this
lower bound is really sharper than the trivial one in
(\ref{inequalitycsc}). In Section \ref{section2}, we will use an
example of a Fano manifold, which was first noticed by Batyrev
(\cite{Ba}) to disprove an old conjecture, to illustrate that for
some K\"{a}hler classes of this manifold the lower bound in
(\ref{inequalitybochner}) is strictly larger than that in
(\ref{inequalitycsc}) and hence these two lower bounds are actually
independent to each other.
\end{enumerate}
\end{remark}

Although the appearance of the Bochner tensor $B$ in the
decomposition of the K\"{a}hler curvature tensor $R$ as an
irreducible summand under the unitary group has been known since the
1949 work of Bochner (\cite{Bo}), B-K metrics did not receive enough
attention for a long time until the work of Kamishima and Bryant
(\cite{Ka}, \cite{Br}). In particular, their  uniformization theorem
for \emph{compact} B-K manifolds tells us that the only
$n$-dimensional \emph{compact} B-K manifolds are the compact
quotients of the symmetric B-K manifolds $M^p_c\times M^{n-p}_{-c}$
(cf. \cite[p. 682]{Bo}), where $M^p_c$ denotes the $p$-dimensional
complex space form of constant holomorphic sectional curvature $c$.
An important corollary of this remarkable result is that any B-K
metric on a \emph{compact} complex manifold must be a cscK metric.
(This conclusion is \emph{not} valid for \emph{non-compact}
manifolds as Professor Bryant pointed out to me that there are B-K
metrics on $\mathbb{C}^n$ that are not cscK).

Now combining this result with (\ref{inequalitycsc}) and Proposition
\ref{proplowerbound} leads to the following integral inequality,
which provides an obstruction to the existence of cscK metrics in a
given K\"{a}hler class and is indeed a refinement of Yau's Chern
number inequality (see Corollary \ref{yauchernnumber} and its
proof).

\begin{theorem}\label{theorem}
Suppose the K\"{a}hler class $[\omega]$ of an $n$-dimensional
compact K\"{a}hler manifold contains a cscK metric. Then we have
\be\label{inequalityobstruction} n^2(c_1[\omega]^{n-1})^2\geq
2(n+1)\cdot[\omega]^n\cdot\big(nc_1^2[\omega]^{n-2}-(n+2)c_2[\omega]^{n-2}\big),\ee

where the equality holds if and only if $[\omega]$ contains a B-K
metric.
\end{theorem}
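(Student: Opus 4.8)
The plan is to combine the two ingredients that the excerpt has already set up: the trivial cohomological lower bound \eqref{inequalitycsc} and the Bochner-type lower bound of Proposition \ref{proplowerbound}. Suppose $\omega \in [\omega]^{+}$ is a cscK metric, whose existence is hypothesized. First I would invoke \eqref{inequalitycsc}: since $\omega$ is cscK, that inequality is an equality, so
\[
\int_M s^2(\omega)\,\omega^n \;=\; \frac{(2n\,c_1[\omega]^{n-1})^2}{[\omega]^n} \;=\; \frac{4n^2\,(c_1[\omega]^{n-1})^2}{[\omega]^n}.
\]
Next I would apply Proposition \ref{proplowerbound} to this same metric $\omega$, which gives
\[
\int_M s^2(\omega)\,\omega^n \;\geq\; 8(n+1)\big(n\,c_1^2[\omega]^{n-2} - (n+2)\,c_2[\omega]^{n-2}\big).
\]
Chaining the equality into the inequality and multiplying through by $[\omega]^n > 0$ yields
\[
4n^2\,(c_1[\omega]^{n-1})^2 \;\geq\; 8(n+1)\cdot[\omega]^n\cdot\big(n\,c_1^2[\omega]^{n-2} - (n+2)\,c_2[\omega]^{n-2}\big),
\]
and dividing by $2$ gives exactly \eqref{inequalityobstruction}. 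Note that all quantities appearing are topological (they depend only on $[\omega]$, $c_1$, $c_2$), so although the chain of reasoning used a particular metric $\omega$, the resulting inequality is an intrinsic statement about the class $[\omega]$ and the Chern numbers.

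For the equality case I would argue as follows. Equality in \eqref{inequalityobstruction} forces equality in the Proposition \ref{proplowerbound} bound for the chosen cscK metric $\omega$, and by the equality clause of that proposition this means $\omega$ is a B-K metric; so $[\omega]$ contains a B-K metric. Conversely, if $[\omega]$ contains a B-K metric $\omega'$, then by the uniformization theorem for compact B-K manifolds recalled in the text, $\omega'$ is automatically cscK; hence both \eqref{inequalitycsc} (as an equality, since $\omega'$ is cscK) and the Proposition \ref{proplowerbound} bound (as an equality, since $\omega'$ is B-K) hold simultaneously for $\omega'$, and the same chaining as above now runs as a chain of equalities, producing equality in \eqref{inequalityobstruction}. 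One technical point worth spelling out: to conclude that $[\omega]$ contains a B-K metric from equality, one should observe that if a cscK metric exists at all then \emph{any} cscK metric in $[\omega]$ realizes equality in \eqref{inequalitycsc}, so the left side $4n^2(c_1[\omega]^{n-1})^2/[\omega]^n$ is fixed; thus equality in \eqref{inequalityobstruction} genuinely forces the Proposition's inequality to be saturated by that cscK metric, and not merely by some other metric.

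I do not anticipate a serious obstacle: the theorem is essentially a formal consequence of Proposition \ref{proplowerbound}, the elementary inequality \eqref{inequalitycsc}, and the cited Kamishima--Bryant uniformization result, all of which may be assumed here. The only mild subtlety is bookkeeping in the equality discussion — making sure the biconditional is stated correctly, i.e.\ that ``$[\omega]$ contains a B-K metric'' is the precise equality condition (rather than, say, ``the given cscK metric is B-K,'' which is the same thing in view of the uniformization theorem forcing a B-K metric to be cscK). I would also remark, as the excerpt promises, that Yau's Chern number inequality is recovered as Corollary \ref{yauchernnumber}: in the K-E case one has $\tilde{\mathrm{R}}\mathrm{ic} \equiv 0$ and the left side of \eqref{inequalityobstruction} can be rewritten via $c_1[\omega]^{n-1} = \frac{s}{2n}[\omega]^n$ (constant $s$), after which \eqref{inequalityobstruction} reduces to $(n+1)c_1^2[\omega]^{n-2} \geq 2(n+1)\cdot\frac{n+2}{n}\cdot(\dots)$—the routine algebra identifying this with $2(n+1)c_2[\omega]^{n-2} \geq n\,c_1^2[\omega]^{n-2}$ I would leave to the proof of that corollary rather than carry out here.
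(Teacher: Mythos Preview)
Your proposal is correct and matches the paper's approach exactly: the paper does not give a separate formal proof of the theorem but simply states that it follows by combining \eqref{inequalitycsc}, Proposition~\ref{proplowerbound}, and the Kamishima--Bryant fact that a compact B-K metric is cscK, which is precisely what you do. Your treatment of the equality biconditional, including the observation that the Kamishima--Bryant uniformization is needed only for the converse direction (the B-K metric in $[\omega]$ need not a priori be the originally given cscK metric), is also exactly the point the paper flags in Remark~\ref{remark2}(1).
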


\begin{remark}\label{remark2}~
\begin{enumerate}
\item
Note that in the above conclusion the B-K metric in $[\omega]$ needs
not necessarily to be the original cscK metric in it. It is this
place that we need the fact that B-K metrics are cscK metrics in the
compact case.

\item
As we have mentioned in Item $(3)$ of Remark \ref{remark1}, for some
K\"{a}hler classes of the Fano manifold which will be described in
details in Section \ref{section2}, (\ref{inequalityobstruction})
does not hold and so these K\"{a}hler classes cannot contain cscK
metrics. Indeed, it can be shown that the holomorphic automorphism
group of this Fano manifold is not reductive and thus our conclusion
is consistent with the Matsushima-Lichnerowicz theorem (\cite{ML}).

\item
In \cite[\S 2.3]{Ti} of his famous lecture notes, Tian discussed
Yau's Chern number inequality along the line of the uniformization
theorem for constant holomorphic sectional curvature K\"{a}hler
manifolds. Indeed he has realized that the curvature integral
expressions for (\ref{ingegral}) can be used to give an integral
inequality for cscK metrics and provided one without a proof in
\cite[p. 21, Remark 2.15]{Ti}. In Section \ref{section3.2} we will
explain that how this inequality can be derived.
\end{enumerate}
\end{remark}

\begin{corollary}[Yau's Chern number inequality]\label{yauchernnumber}
Suppose $M$ is an $n$-dimensional compact K\"{a}hler manifold.

\begin{enumerate}
\item
If $c_1<0$, then
$$2(n+1)c_2(-c_1)^{n-2}\geq n(-c_1)^n,$$

 where the
equality holds if and only if $M$ is covered by the unit ball in
$\mathbb{C}^n$.

\item
If $c_1=0$, then $c_2[\omega]^{n-2}\geq0$ for any K\"{a}hler class
$[\omega]$, where the equality holds if and only if $M$ is covered
by a complex torus.
\end{enumerate}
\end{corollary}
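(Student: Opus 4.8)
The plan is to derive Corollary \ref{yauchernnumber} as a direct consequence of Theorem \ref{theorem}, by exhibiting in each of the two cases a preferred K\"{a}hler class that carries a cscK (indeed K-E) metric, so that \eqref{inequalityobstruction} applies, and then tracking the equality case through the uniformization results. First I would treat case (1): if $c_1<0$, then $-c_1$ is a K\"{a}hler class, and by the Aubin--Yau theorem it contains a K\"{a}hler--Einstein metric $\omega$ with $\mathrm{Ric}(\omega)=-\omega$, in particular a cscK metric. Taking $[\omega]=-c_1$ in \eqref{inequalityobstruction} and using $c_1[\omega]^{n-1}=-[\omega]^n$ and $c_1^2[\omega]^{n-2}=[\omega]^n$, the left side becomes $n^2[\omega]^n\cdot[\omega]^n$ and the parenthesized factor on the right becomes $\bigl(n[\omega]^n-(n+2)c_2[\omega]^{n-2}\bigr)$; dividing through by the positive number $[\omega]^n=(-c_1)^n$ and rearranging yields exactly $2(n+1)c_2(-c_1)^{n-2}\ge n(-c_1)^n$.

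Next I would treat case (2): if $c_1=0$, then by the Calabi--Yau theorem every K\"{a}hler class $[\omega]$ contains a Ricci-flat metric, which is trivially cscK with $s\equiv0$. Plugging $c_1=0$ into \eqref{inequalityobstruction} kills the left side and the term $nc_1^2[\omega]^{n-2}$ on the right, leaving $0\ge -2(n+1)(n+2)[\omega]^n\,c_2[\omega]^{n-2}$, i.e. $c_2[\omega]^{n-2}\ge0$ since $[\omega]^n>0$. For both inequalities the equality statement comes from the ``if and only if'' clause of Theorem \ref{theorem}: equality forces $[\omega]$ to contain a B-K metric $\omega'$, which by the Kamishima--Bryant uniformization theorem quoted above makes $M$ a compact quotient of $M^p_c\times M^{n-p}_{-c}$. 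In the $c_1<0$ case the sign of the Ricci curvature forces $c\le 0$ and in fact the product structure must degenerate so that $M$ is a quotient of $M^n_{-c}$ with $c<0$, i.e. covered by the ball in $\mathbb{C}^n$ (up to rescaling); in the $c_1=0$ case it forces $c=0$, so $M^n_0=\mathbb{C}^n$ and $M$ is covered by a complex torus. Conversely, on a ball quotient or torus quotient the relevant metric is a complex space form, hence $P=B\equiv0$, so it is in particular B-K and equality holds.

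The main obstacle will be the equality discussion rather than the inequalities themselves, which are essentially substitutions. Specifically, one must argue carefully that when $M$ is a compact quotient of the symmetric space $M^p_c\times M^{n-p}_{-c}$ and additionally $c_1<0$ (resp. $c_1=0$), the only possibility is $p=0$ or $p=n$ with $c<0$ (resp. $c=0$): a genuine product with both factors nontrivial and curvatures of opposite sign would have indefinite Ricci curvature, contradicting $c_1<0$, while $c=0$ is the only way to get $c_1=0$. One must also check that the B-K metric produced by the equality case, which need not be the original K-E metric, still lies in the class for which the cohomological identities $c_1[\omega]^{n-1}=-[\omega]^n$ etc.\ were used --- but this is automatic since $[\omega]$ is fixed throughout and only its cohomology enters \eqref{inequalityobstruction}. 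Finally I would remark that a complex space form of constant holomorphic sectional curvature $c<0$ is, after normalizing $c=-1$, precisely the unit ball $B^n\subset\mathbb{C}^n$ with its Bergman metric, which identifies the equality locus in case (1) as claimed.
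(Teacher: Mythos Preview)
Your derivation of the two inequalities is correct and identical to the paper's: apply Aubin--Yau (resp.\ Calabi--Yau) to produce a cscK metric in $-c_1$ (resp.\ in an arbitrary $[\omega]$), then substitute into \eqref{inequalityobstruction}.

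Where you diverge from the paper is in the equality discussion. The paper argues as follows: the B-K metric $\omega'$ furnished by equality is cscK (Kamishima--Bryant), and since $[\omega']=-c_1$ (resp.\ $c_1=0$) a cscK metric in that class is automatically K\"{a}hler--Einstein (\cite[Prop.~2.12]{Ti}); hence $P=0$, and together with $B=0$ this makes $\omega'$ a complex space form of negative (resp.\ zero) holomorphic sectional curvature, whence the uniformization statement. This route never needs the fine structure of the Kamishima--Bryant classification.

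You instead invoke the full classification $M\cong (M^p_c\times M^{n-p}_{-c})/\Gamma$ and try to rule out genuine products by saying ``the sign of the Ricci curvature forces $c\le 0$.'' That step is not quite right as stated: $c_1<0$ is a cohomological condition and does \emph{not} by itself force the Ricci form of an arbitrary K\"{a}hler metric (here $\omega'$) to be pointwise negative. To make your argument work you would either (a) first show $\omega'$ is K-E---which is exactly the paper's step and renders the product analysis superfluous---or (b) argue cohomologically that a compact quotient of $\mathbb{CP}^p\times B^{n-p}$ with both factors nontrivial fibres over a ball quotient with Fano fibres, so that $c_1$ restricts positively to a fibre and hence cannot be a negative class. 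Route (b) is valid but is more work than you indicate; the paper's route (a) is shorter and avoids it entirely.
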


\begin{proof}
If $c_1<0$, then $-c_1$ is a K\"{a}hler class and contains a K-E
metric by Aubin and Yau's theorem. Replacing the K\"{a}hler class
$[\omega]$ in (\ref{inequalityobstruction}) with $-c_1$ we obtain
$$2(n+1)c_2(-c_1)^{n-2}\geq n(-c_1)^n,$$

where the equality holds if and only if the K\"{a}hler class $-c_1$
contains a B-E metric, say $\omega$. By the above-mentioned
Kamishima-Bryant's result this K\"{a}hler metric $\omega$ is a cscK
metric and thus a K-E metric (\cite[p. 19, Prop. 2.12]{Ti}). This
means the traceless part tensor $P=0$ and the Bochner tensor $B=0$.
So this metric $\omega$ has negative constant holomorphic sectional
curvature and thus is covered by the unit ball in $\mathbb{C}^n$
(\cite[Theorem 1.12]{Ti}).

If $c_1=0$, then any K\"{a}hler class $[\omega]$ has a Ricci-flat
K\"{a}hler metric by the Calabi-Yau theorem. So
(\ref{inequalityobstruction}) tells us that
$c_2[\omega]^{n-2}\geq0$. The equality holds if and only if
$[\omega]$ contains a B-K metric, say $\omega$. Similar to the above
argument we know that this $\omega$ is a K-E metric and thus is
Ricci-flat. This means M is a compact K\"{a}hler manifold with
vanishing holomorphic sectional curvature and so it is covered by a
complex torus.
\end{proof}

\section{An example}\label{section2}
Let $\mathbb{P}^{n-1}$ denote the $(n-1)$-dimensional complex
projective space and
$$X:=\mathbb{P}\big(\mathcal{O}_{\mathbb{P}^{n-1}}
\oplus\mathcal{O}_{\mathbb{P}^{n-1}}(n-1)\big),$$

which is an $n$-dimensional Fano manifold. There was an old
conjecture asserting that, among all the $n$-dimensional Fano
manifolds $M$, the maximum of the top intersection number
$c_1^n(M)$, also called the degree of $M$, can only be attained by
$\mathbb{P}^n$. Namely, $c_1^n(M)\leq(n+1)^n$ and with equality if
and only if $M\cong\mathbb{P}^n$. In \cite{Ba}, Batyrev noticed that
$$c_1^n(X)=\frac{(2n-1)^n-1}{n-1}\sim\frac{2^ne^{-3/2}}{n}(n+1)^n$$

and thus disproved this conjecture. In \cite[p. 137-139]{De} Debarre
extended the construction of $X$ to a family of Fano manifolds and
used them to illustrate that there is indeed \emph{no} universal
polynomial upper bound on $\sqrt[n]{c_1^n(M)}$ among all the
$n$-dimensional Fano manifolds $M$.

In this section we will see that this $n$-dimensional Fano manifold
$X$ is also an ideal example for our purpose. More precisely, for
some K\"{a}hler classes of $X$ we can show that they don't satisfy
(\ref{inequalityobstruction}) and thus cannot contain cscK metrics.
Consequently, for these K\"{a}hler classes of $X$ the lower bound in
(\ref{inequalitybochner}) is sharper than that in
(\ref{inequalitycsc}) and hence clarify the non-triviality of
Proposition \ref{proplowerbound}.

Let $L$ and $H$ denote the first Chern classes of the line bundle
$\mathcal{O}_X(1)$ and the pull back of the hyperplane line bundle
$\mathcal{O}_{\mathbb{P}^{n-1}}(1)$ respectively. Then the
intersection ring of $X$ is generated by $L$ and $H$ with the
relations (cf. \cite[p. 138]{De})
\be\label{relation}L^2=(n-1)LH,\qquad LH^{n-1}=1,\qquad H^n=0.\ee

Standard calculation tells us that the first two Chern classes of
$X$ are as follows.
$$c_1=2L+H,\qquad c_2=2nLH-\frac{n(n-1)}{2}H^2.$$

With these data in hand, we can easily get the following lemma.

\begin{lemma}\label{lemmacompute}
Let $\Omega_{\alpha,\beta}:=\alpha L+\beta H$ ($\alpha,\beta>0$) be
a K\"{a}hler class of $X$. If we set
$$t:=(n-1)\alpha+\beta,$$
then we have
$$\Omega_{\alpha,\beta}^n=\frac{1}{n-1}(t^n-\beta^n),\qquad
c_1\Omega_{\alpha,\beta}^{n-1}=\frac{1}{n-1}[(2n-1)t^{n-1}-\beta^{n-1}],$$
$$c_1^2\Omega_{\alpha,\beta}^{n-2}=\frac{1}{n-1}[(2n-1)^2t^{n-2}-\beta^{n-2}]
,\qquad
c_2\Omega_{\alpha,\beta}^{n-2}=\frac{n}{2}[3t^{n-2}+\beta^{n-2}].$$
\end{lemma}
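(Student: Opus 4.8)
The plan is to compute each of the four intersection numbers directly from the intersection relations (\ref{relation}) together with the given expressions $c_1 = 2L+H$ and $c_2 = 2nLH - \tfrac{n(n-1)}{2}H^2$. The key observation that makes everything clean is that the relation $L^2 = (n-1)LH$ lets one reduce any monomial in $L$ and $H$ of top degree to a combination of $LH^{n-1}$ and $H^n$; since $H^n = 0$ and $LH^{n-1} = 1$, only the $LH^{n-1}$ part survives. Concretely, I would first prove by induction on $k$ that $L^k H^{n-k} = (n-1)^{k-1} LH^{n-1} = (n-1)^{k-1}$ for $k \geq 1$, using $L^k H^{n-k} = L^{k-1}(L^2)H^{n-k-1}\cdot H^{?}$... more precisely $L\cdot L^{k-1}H^{n-k} $ and feeding in $L^2=(n-1)LH$ once. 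This single formula, call it $(\star)$, is the engine for all four computations.

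Next I would expand $\Omega_{\alpha,\beta}^n = (\alpha L + \beta H)^n$ by the binomial theorem. The $\beta^n H^n$ term vanishes, and every other term $\binom{n}{k}\alpha^k\beta^{n-k}L^kH^{n-k}$ with $k\geq 1$ becomes $\binom{n}{k}\alpha^k\beta^{n-k}(n-1)^{k-1}$ by $(\star)$. Pulling out a factor $\tfrac{1}{n-1}$, the sum becomes $\tfrac{1}{n-1}\sum_{k\geq 1}\binom{n}{k}(\alpha(n-1))^k\beta^{n-k} = \tfrac{1}{n-1}\big((\alpha(n-1)+\beta)^n - \beta^n\big) = \tfrac{1}{n-1}(t^n - \beta^n)$, which is exactly the claimed formula. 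The same mechanism handles $c_1\Omega_{\alpha,\beta}^{n-1} = (2L+H)(\alpha L+\beta H)^{n-1}$: expand the $(n-1)$-st power, multiply through by $2L+H$, reduce with $(\star)$, and recognize the resulting polynomial identity in $t$ and $\beta$. Here one should expect a linear combination of a derivative-type sum (from the $2L$ factor shifting the power of $L$ up) and the original-type sum (from the $H$ factor); collecting terms should produce $\tfrac{1}{n-1}[(2n-1)t^{n-1}-\beta^{n-1}]$, and it is worth double-checking the coefficient $2n-1 = 2(n-1)+1$ arises naturally as the factor attached to $t$.

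For $c_1^2\Omega_{\alpha,\beta}^{n-2}$ I would first compute $c_1^2 = (2L+H)^2 = 4L^2 + 4LH + H^2 = 4(n-1)LH + 4LH + H^2 = (4n)LH + H^2$ (using $L^2 = (n-1)LH$), then multiply by $(\alpha L+\beta H)^{n-2}$ and reduce; the two surviving terms $LH$ and $H^2$ times the $(n-2)$-nd power each reduce cleanly via $(\star)$, and the bookkeeping should yield $\tfrac{1}{n-1}[(2n-1)^2 t^{n-2} - \beta^{n-2}]$ — note $(2n-1)^2 = 4n + (2n-1)^2 - 4n$, so one should verify the cross-terms assemble correctly. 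Finally $c_2\Omega_{\alpha,\beta}^{n-2} = (2nLH - \tfrac{n(n-1)}{2}H^2)(\alpha L+\beta H)^{n-2}$: the $LH$-term and the $H^2$-term again reduce by $(\star)$, and since $c_2$ already carries the factor structure, the answer comes out as $\tfrac{n}{2}[3t^{n-2}+\beta^{n-2}]$ without the overall $\tfrac{1}{n-1}$.

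The computations are entirely routine once $(\star)$ is in hand; the only place requiring genuine care — and hence the main obstacle — is the algebraic repackaging of the binomial sums into the compact closed forms in $t$ and $\beta$, in particular getting the coefficients $2n-1$, $(2n-1)^2$, and the $3$ and $\tfrac{n}{2}$ in the $c_2$ formula exactly right, since an off-by-$(n-1)$ or a misplaced binomial shift is easy to commit. A useful sanity check at the end is to verify the first intersection number reduces to Batyrev's degree $c_1^n(X) = \tfrac{(2n-1)^n-1}{n-1}$ upon substituting $[\omega]=c_1$, i.e. $\alpha=2,\beta=1$, giving $t = 2(n-1)+1 = 2n-1$ and $\Omega^n = \tfrac{(2n-1)^n-1}{n-1}$, which matches.
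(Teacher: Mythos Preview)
Your proposal is correct and follows essentially the same route as the paper's own proof: the paper first records exactly your formula $(\star)$, namely $L^iH^{n-i}=(n-1)^{i-1}$ for $1\le i\le n$ (and $H^n=0$), then expands $\Omega_{\alpha,\beta}^n$ and $c_1^2\Omega_{\alpha,\beta}^{n-2}$ by the binomial theorem, reduces via $(\star)$, and repackages the sums as $\tfrac{1}{n-1}(t^n-\beta^n)$ and $\tfrac{1}{n-1}[(2n-1)^2t^{n-2}-\beta^{n-2}]$, declaring the remaining two identities analogous. Your preliminary simplification $c_1^2=4nLH+H^2$ is also exactly what the paper uses, and your Batyrev sanity check is a nice extra not present there.
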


\begin{proof}
We only treat $\Omega_{\alpha,\beta}^n$ and
$c_1^2\Omega_{\alpha,\beta}^{n-2}$ and the other two cases are
similar. First note that the relation (\ref{relation}) implies
\begin{eqnarray}\label{relation2}
L^iH^{n-i}= \left\{ \begin{array}{ll} 0, & \text{if
$i=0$},\\
(n-1)^{i-1}, & \text{if $1\leq i\leq n$}.
\end{array} \right.
\end{eqnarray}

Therefore,
\be\begin{split} \Omega_{\alpha,\beta}^n&=(\alpha L+\beta H)^n\\
&=\sum_{i=0}^n\binom{n}{i}\alpha^i\beta^{n-i}L^iH^{n-i}\\
&=\sum_{i=1}^n\binom{n}{i}\alpha^i\beta^{n-i}(n-1)^{i-1}\qquad\big(\text{by
(\ref{relation2})}\big)\\
&=\frac{1}{n-1}\big\{[(n-1)\alpha+\beta]^n-\beta^n\big\}\\
&=\frac{1}{n-1}(t^n-\beta^n).
\end{split}\nonumber\ee

\be\begin{split} c_1^2\Omega_{\alpha,\beta}^{n-2}&=
(2L+H)^2(\alpha L+\beta H)^{n-2}\\
&=(4nLH+H^2)\sum_{i=0}^{n-2}\binom{n-2}{i}
\alpha^i\beta^{n-2-i}L^iH^{n-2-i}\\
&=4n\sum_{i=0}^{n-2}\binom{n-2}{i}
\alpha^i\beta^{n-2-i}L^{i+1}H^{n-1-i}+
\sum_{i=0}^{n-2}\binom{n-2}{i}
\alpha^i\beta^{n-2-i}L^iH^{n-i}\\
&=4n\sum_{i=0}^{n-2}\binom{n-2}{i}
\alpha^i\beta^{n-2-i}(n-1)^i+\sum_{i=1}^{n-2}\binom{n-2}{i}
\alpha^i\beta^{n-2-i}(n-1)^{i-1}\qquad\big(\text{by
(\ref{relation2})}\big)\\
&=4nt^{n-2}+\frac{1}{n-1}(t^{n-2}-\beta^{n-2})\\
&=\frac{1}{n-1}[(2n-1)^2t^{n-2}-\beta^{n-2}].
\end{split}\nonumber\ee
\end{proof}

This lemma leads to the following

\begin{proposition}\label{prop2}
Set
$$f(n,\alpha,\beta):=2(n+1)\Omega_{\alpha,\beta}^n[n
c_1^2\Omega_{\alpha,\beta}^{n-2}-(n+2)c_2
\Omega_{\alpha,\beta}^{n-2}]-n^2(c_1
\Omega_{\alpha,\beta}^{n-1})^2.$$

For arbitrary positive numbers $\alpha,\beta$, there exists a
positive integer $N(\alpha,\beta)$ such that
$$f(n,\alpha,\beta)>0
\qquad\text{whenever} \qquad n\geq N(\alpha,\beta).$$

Consequently, when $n\geq N(\alpha,\beta)$, the lower bound
(\ref{inequalitybochner}) for the Calabi functional in the
K\"{a}hler class $\Omega_{\alpha,\beta}$ is sharper than that in
(\ref{inequalitycsc}). This means the non-triviality of Proposition
\ref{proplowerbound} with respect to the trivial one in
(\ref{inequalitycsc}). Moreover, these K\"{a}hler classes
$\Omega_{\alpha,\beta}$ don't contain cscK metrics.
\end{proposition}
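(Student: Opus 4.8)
The plan is to substitute the explicit formulas from Lemma~\ref{lemmacompute} into the definition of $f(n,\alpha,\beta)$, group everything according to powers of $t$ and $\beta$, and then extract the dominant term as $n\to\infty$. First I would compute the bracket $n\,c_1^2\Omega^{n-2}-(n+2)c_2\Omega^{n-2}$ using the lemma: this gives $\frac{n}{n-1}[(2n-1)^2 t^{n-2}-\beta^{n-2}]-\frac{n(n+2)}{2}[3t^{n-2}+\beta^{n-2}]$, which I would organize as $A_n t^{n-2}+B_n\beta^{n-2}$ with $A_n=\frac{n(2n-1)^2}{n-1}-\frac{3n(n+2)}{2}$ and $B_n=-\frac{n}{n-1}-\frac{n(n+2)}{2}$. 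Note $A_n=\frac{n}{2(n-1)}\big(2(2n-1)^2-3(n+2)(n-1)\big)=\frac{n}{2(n-1)}(5n^2-7n+8)>0$, so $A_n\sim \tfrac{5}{2}n^2$ while $B_n\sim-\tfrac12 n^2$.

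Next I would multiply by $2(n+1)\Omega^n=\frac{2(n+1)}{n-1}(t^n-\beta^n)$ and expand the second negative term $n^2(c_1\Omega^{n-1})^2=\frac{n^2}{(n-1)^2}\big((2n-1)t^{n-1}-\beta^{n-1}\big)^2$. Collecting by monomials, $f$ becomes a combination of $t^{2n-2}$, $t^n\beta^{n-2}$, $t^{n-2}\beta^n$, $\beta^{2n-2}$, $t^{2n-1}\beta^{-1}$-type cross terms from squaring, and $\beta^{n-1}t^{n-1}$. The leading behaviour is governed by the coefficient of $t^{2n-2}$: from the first product it is $\frac{2(n+1)}{n-1}A_n\sim \frac{2\cdot n}{1}\cdot\frac{5}{2}n^2=5n^3$ (more precisely $\sim 5n^3$), while from $n^2(c_1\Omega^{n-1})^2$ the $t^{2n-2}$ coefficient is $\frac{n^2(2n-1)^2}{(n-1)^2}\sim 4n^2$. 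Since $t=(n-1)\alpha+\beta>\beta$, we have $t^{2n-2}$ strictly dominating every other monomial appearing (each other term carries a factor $\beta^k t^{2n-2-k}$ with $k\ge 1$, hence is $O\big((\beta/t)\,t^{2n-2}\big)=O\big(\tfrac{1}{n}t^{2n-2}\big)$ because $t/\beta\to\infty$ like $n$), so for fixed $\alpha,\beta$ the coefficient of $t^{2n-2}$ wins and $f(n,\alpha,\beta)>0$ for all large $n$. That produces the desired $N(\alpha,\beta)$.

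For the remaining assertions, once $f(n,\alpha,\beta)>0$ we have $2(n+1)\Omega^n\big(nc_1^2\Omega^{n-2}-(n+2)c_2\Omega^{n-2}\big)>n^2(c_1\Omega^{n-1})^2$, which by Proposition~\ref{proplowerbound} means the right side of (\ref{inequalitybochner}) times $\Omega^n$ exceeds $\big(\int s\,\omega^n\big)^2$ computed via $s\omega^n=2nc_1\wedge\omega^{n-1}$; dividing the bound (\ref{inequalitybochner}) by the corresponding Cauchy--Schwarz bound in (\ref{inequalitycsc}) shows (\ref{inequalitybochner}) is the strictly larger lower bound, establishing non-triviality of Proposition~\ref{proplowerbound}. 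Finally, $f(n,\alpha,\beta)>0$ is exactly the negation of (\ref{inequalityobstruction}) for the class $\Omega_{\alpha,\beta}$, so by Theorem~\ref{theorem} such a class admits no cscK metric.

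The main obstacle I expect is purely bookkeeping: writing $f$ as an honest polynomial in the two ``variables'' $t$ and $\beta$ (with $n$-dependent coefficients) and checking that no cancellation kills the leading $t^{2n-2}$ term. The key quantitative input is the sign and size of $A_n$, i.e.\ that $2(2n-1)^2-3(n+2)(n-1)=5n^2-7n+8$ is positive, which is immediate; after that, the inequality $t>\beta$ together with $t\gtrsim n\beta$ for fixed $\alpha,\beta$ does all the asymptotic work. One should be slightly careful that $\alpha$ or $\beta$ could be small, but since they are fixed positive constants and every subleading monomial gains at least one factor of $\beta/t\le 1/\big((n-1)(\alpha/\beta)+1\big)\to 0$, the estimate is uniform enough to produce a finite threshold $N(\alpha,\beta)$.
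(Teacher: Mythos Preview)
Your approach is exactly the paper's: substitute Lemma~\ref{lemmacompute}, collect monomials in $t$ and $\beta$, and use $t/\beta=(n-1)\alpha/\beta+1\to\infty$ to see that the $t^{2n-2}$ term dominates. The paper just carries this out more explicitly, multiplying through by $(n-1)^2/n$ to get an honest polynomial identity and then dividing by $t^{n-1}\beta^{n-1}$ so that the controlling quantity $(t/\beta)^{n-1}$ is isolated.

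Two arithmetic slips to fix. First, $2(2n-1)^2-3(n+2)(n-1)=5n^2-11n+8$, not $5n^2-7n+8$; this is harmless since both are positive. Second, and more important, $\frac{2(n+1)}{n-1}\sim 2$, not $\sim 2n$, so the $t^{2n-2}$ coefficient coming from the first product is $\sim 5n^2$, not $5n^3$. Thus both contributions to the $t^{2n-2}$ coefficient are of order $n^2$, and what actually matters is that their \emph{difference} is positive; in fact the net $t^{2n-2}$ coefficient of $\frac{(n-1)^2}{n}f$ works out to $n^3-2n^2-4n+8$, which is the leading term the paper displays. Your conclusion is still correct because $5>4$, but the reasoning ``$5n^3$ beats $4n^2$'' is not the right comparison. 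Also, there are no ``$t^{2n-1}\beta^{-1}$-type'' terms from squaring $[(2n-1)t^{n-1}-\beta^{n-1}]$; the cross term is $t^{n-1}\beta^{n-1}$. Once these are corrected your write-up matches the paper.
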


\begin{proof}
Using Lemma \ref{lemmacompute} we have \be\begin{split}
f(n,\alpha,\beta)=&\frac{2(n+1)}{n-1}(t^n-\beta^n)
\big[\frac{n(2n-1)^2}{n-1}t^{n-2}-\frac{n}{n-1}\beta^{n-2}
-\frac{3n(n+2)}{2}t^{n-2}-\frac{n(n+2)}{2}\beta^{n-2}\big]\\
&- \frac{n^2}{(n-1)^2}[(2n-1)t^{n-1}-\beta^{n-1}]^2.
\end{split}\nonumber\ee

Direct computation shows that
 \be\begin{split}
&\frac{(n-1)^2}{n}f(n,\alpha,\beta)\\
=& (n^3-2n^2-4n+8)t^{2n-2}+(n^3+2n^2)\beta^{2n-2}+(4n^2-2n)t^{n-1}
\beta^{n-1}\\
&-n(n+1)^2t^n\beta^{n-2}-(n+1)(5n^2-11n+8)t^{n-2}\beta^n.
\end{split}\nonumber\ee

Thus \be\begin{split} &\frac{(n-1)^2}{nt^{n-1}\beta^{n-1}}
f(n,\alpha,\beta)\\
=&(n^3-2n^2-4n+8)(\frac{t}{\beta})^{n-1}+
(n^3+2n^2)
(\frac{\beta}{t})^{n-1}+(4n^2-2n)\\
&-(n^2+n)(\frac{t}{\beta}) -(n+1)(5n^2-11n+8)(\frac{\beta}{t}).
\end{split}\ee

The fact that $\frac{t}{\beta}=(n-1)\frac{\alpha}{\beta}+1$ leads to
the desired result.
\end{proof}

\begin{remark}
It can be shown that the holomorphic automorphism group of $X$ is
not reductive (cf. \cite[p. 138]{De}). So the nonexistence of cscK
metrics in these K\"{a}hler classes can also be obtained via the
Matsushima-Lichnerowicz theorem (\cite{ML}). At the time of writing
this note the author is not able to find out a compact K\"{a}hler
manifold $(M,\omega)$ whose holomorphic automorphism group is
reductive such that the K\"{a}hler class $[\omega]$ does not satisfy
(\ref{inequalityobstruction}).
\end{remark}

\section{Proof of Proposition \ref{proplowerbound} and related
remarks}
\subsection{Proof of Proposition
\ref{proplowerbound}}\label{section3.1}

Suppose $M$ is a compact $n$-dimensional K\"{a}hler manifold
 with a K\"{a}hler metric $g$. Under local complex coordinates
 $(z^1,\ldots,z^n)$, we write the K\"{a}hler metric $g$,
 its K\"{a}hler form $\omega$, the $(4,0)$-type K\"{a}hler curvature tensor
 $R$, the Ricci form $\text{Ric}(\omega)$, and the (Riemannian) scalar curvature
 $s(\omega)$ as follows.
$$g=(g_{i\bar{j}}):=
\big(g(\frac{\partial}{\partial z^i},\frac{\partial}{\partial
\bar{z^j}})\big),\qquad\omega=\frac{\sqrt{-1}}{2\pi}g_{i\bar{j}}\text{d}z_i\wedge\text{d}\bar{z_j},
\qquad R_{i\bar{j}k\bar{l}}:=R(\frac{\partial}{\partial
z_i},\frac{\partial}{\partial \bar{z_j}}, \frac{\partial}{\partial
z_k},\frac{\partial}{\partial \bar{z_l}})$$
$$\text{Ric}(\omega)=\frac{\sqrt{-1}}{2\pi}
r_{i\bar{j}}\text{d}z_i\wedge\text{d}\bar{z_j}=\frac{\sqrt{-1}}{2\pi}
g^{kl}R_{i\bar{j}k\bar{l}}\text{d}z_i\wedge\text{d}\bar{z_j},
$$
$$s(\omega):=2g^{i\bar{j}}r_{i\bar{j}},\qquad
(g^{j\bar{i}}):=(g_{i\bar{j}})^{-1}.$$

The pointwise squared norms of $R$ and $\text{Ric}(\omega)$ are
defined as follows:
$$|R|^2:=R_{i\bar{j}k\bar{l}}R_{p\bar{q}r\bar{s}}
g^{i\bar{q}}g^{p\bar{j}}g^{k\bar{s}}g^{r\bar{l}},
\qquad|\text{Ric}(\omega)|^2:=r_{i\bar{j}}r_{p\bar{q}}g^{i\bar{q}}g^{p\bar{j}}.$$

It is well-known that these norms are independent of the choices of
local coordinates. With these notions and symbols understood, we
have the following two well-known facts, which essentially should be
due to Apte (\cite{Ap}). A detailed proof can be found in \cite[p.
225-226]{Zh}.

\begin{lemma}[Apte]\label{lemmaapte}
\be\label{formulaapte1}\text{Ric}^2(\omega)\wedge\omega^{n-2}=
\big[\frac{s^2(\omega)}{4}-|\text{Ric}(\omega)|^2\big]\cdot\frac{\omega^n}{n(n-1)}\ee
\be\label{formulaapte2}c_2(\omega)\wedge\omega^{n-2}=
\big[\frac{s^2(\omega)}{4}-2|\text{Ric}(\omega)|^2+|R|^2\big]\cdot\frac{\omega^n}{2n(n-1)}\ee
\end{lemma}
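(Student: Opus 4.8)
The plan is to reduce both identities to a pointwise statement and then to a short piece of multilinear algebra. Since the two sides of (\ref{formulaapte1}) and (\ref{formulaapte2}) are globally defined $(2n)$-forms on $M$, it is enough to verify them at an arbitrary point $p$. There I would pick holomorphic normal coordinates centered at $p$, so that $g_{i\bar j}(p)=\delta_{ij}$ and $\mathrm{d}g_{i\bar j}(p)=0$; then $\omega(p)=\frac{\sqrt{-1}}{2\pi}\sum_i\mathrm{d}z^i\wedge\mathrm{d}\bar z^i$, all indices on $R$ and $\mathrm{Ric}(\omega)$ may be raised and lowered freely at $p$, and $|\mathrm{Ric}(\omega)|^2(p)=\sum_{i,j}|r_{i\bar j}|^2$, $|R|^2(p)=\sum_{i,j,k,l}|R_{i\bar jk\bar l}|^2$.

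The key elementary ingredient is the pointwise identity: for any two Hermitian $(1,1)$-forms $\alpha=\frac{\sqrt{-1}}{2\pi}\alpha_{i\bar j}\mathrm{d}z^i\wedge\mathrm{d}\bar z^j$ and $\beta=\frac{\sqrt{-1}}{2\pi}\beta_{i\bar j}\mathrm{d}z^i\wedge\mathrm{d}\bar z^j$ one has, at a point where $\omega$ is standard,
\[
\alpha\wedge\beta\wedge\omega^{n-2}=\frac{1}{n(n-1)}\bigl[(\operatorname{tr}\alpha)(\operatorname{tr}\beta)-\langle\alpha,\beta\rangle\bigr]\,\omega^n,
\]
where $\operatorname{tr}\alpha=\sum_i\alpha_{i\bar i}$ and $\langle\alpha,\beta\rangle=\sum_{i,j}\alpha_{i\bar j}\beta_{j\bar i}$. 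I would prove this either by diagonalizing $\alpha$ by a unitary change of coordinates (which keeps $\omega$ in standard form), after which both sides are read off from the elementary symmetric functions $e_1(\lambda)^2$ and $e_2(\lambda)$ of the eigenvalues of $\alpha$; or directly, by noting that, against $\omega^{n-2}$, only the monomials $\mathrm{d}z^i\wedge\mathrm{d}\bar z^j\wedge\mathrm{d}z^k\wedge\mathrm{d}\bar z^l$ with $\{i,k\}=\{j,l\}$ and $i\ne k$ survive, and counting orderings together with the $(n-2)!$ terms of $\omega^{n-2}$. Taking $\alpha=\beta=\mathrm{Ric}(\omega)$, for which $\operatorname{tr}\mathrm{Ric}(\omega)=\tfrac12 s(\omega)$ and $\langle\mathrm{Ric}(\omega),\mathrm{Ric}(\omega)\rangle=|\mathrm{Ric}(\omega)|^2$, yields (\ref{formulaapte1}) immediately.

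For (\ref{formulaapte2}) I would use the Chern--Weil expression of the second Chern form through the curvature matrix $\Theta=(\Theta^i_j)$ of $(TM,g)$, namely $c_2(\omega)=\tfrac12\bigl(\tfrac{\sqrt{-1}}{2\pi}\bigr)^2\sum_{i,j}\bigl(\Theta^i_i\wedge\Theta^j_j-\Theta^i_j\wedge\Theta^j_i\bigr)$, with $\Theta^i_j$ expressed through the components $R_{i\bar jk\bar l}$ in the normal coordinates above (the overall sign in $\Theta^i_j$ is irrelevant, as $c_2$ is quadratic in it). The first double sum reassembles $c_1^2(\omega)=\mathrm{Ric}^2(\omega)$, already handled by the identity above. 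It then remains to compute $\sum_{i,j}\Theta^i_j\wedge\Theta^j_i\wedge\omega^{n-2}$; keeping only the index patterns that survive against $\omega^{n-2}$ and invoking the K\"{a}hler symmetries $R_{i\bar jk\bar l}=R_{k\bar ji\bar l}=R_{i\bar lk\bar j}$, this contraction collapses to a linear combination of $|R|^2$ and $|\mathrm{Ric}(\omega)|^2$ times $\omega^n/(n(n-1))$. Subtracting it from the $c_1^2$ term and carrying the factors of $\tfrac12$ and $n(n-1)$ produces exactly $\bigl[\tfrac{s^2(\omega)}{4}-2|\mathrm{Ric}(\omega)|^2+|R|^2\bigr]\cdot\tfrac{\omega^n}{2n(n-1)}$.

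The routine parts are the pointwise reduction and the specialization yielding (\ref{formulaapte1}); the delicate step is the contraction of $\sum_{i,j}\Theta^i_j\wedge\Theta^j_i\wedge\omega^{n-2}$, where one must keep careful track of which contractions of $R_{i\bar jk\bar l}$ occur and with what multiplicities, distinguishing $\sum|R_{i\bar jk\bar l}|^2$ from $\sum|r_{i\bar j}|^2$ and using the K\"{a}hler symmetries consistently. This is precisely the computation in which numerical coefficients are easily mishandled, so I would carry it out via the diagonalization trick wherever possible, and, as a final safeguard, check the resulting coefficients against the known value of $\int_M c_2(\omega)\wedge\omega^{n-2}$ for a complex space form (e.g. $\mathbb{P}^n$ with the Fubini--Study metric, where $s(\omega)$, $|\mathrm{Ric}(\omega)|^2$ and $|R|^2$ are all explicit and $c_2=\tfrac{n}{2(n+1)}c_1^2$).
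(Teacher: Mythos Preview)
Your outline is correct and is precisely the standard proof of Apte's identities. Note, however, that the paper itself does \emph{not} supply a proof of this lemma: it simply states the two formulas, attributes them to Apte, and refers the reader to \cite[p.~225--226]{Zh} for details. So there is nothing to compare against in the paper beyond the bare statement; what you have sketched is essentially the argument one finds in that reference (pointwise reduction in normal coordinates, the trace identity $\alpha\wedge\beta\wedge\omega^{n-2}=\frac{1}{n(n-1)}\bigl[(\operatorname{tr}\alpha)(\operatorname{tr}\beta)-\langle\alpha,\beta\rangle\bigr]\omega^n$, and the Chern--Weil expression $c_2=\tfrac12\bigl(c_1^2-\operatorname{tr}(\tfrac{\sqrt{-1}}{2\pi}\Theta)^2\bigr)$ followed by the contraction of $\sum_{i,j}\Theta^i_j\wedge\Theta^j_i\wedge\omega^{n-2}$ using the K\"{a}hler symmetries of $R_{i\bar jk\bar l}$). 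Your proposed sanity check on $\mathbb{P}^n$ is a sensible safeguard for the numerical coefficients in the second formula.
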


\begin{remark}~
\begin{enumerate}
\item
In \cite{Zh}, the K\"{a}hler form is defined to be
$\frac{\sqrt{-1}}{2}g_{i\bar{j}}\text{d}z_i\wedge\text{d}\bar{z_j}$
and so there is an extra factor $\pi^2$ in the expressions.

\item
Apte only stated (\ref{formulaapte2}) in \cite[p. 150]{Ap} in a
slightly different form. In her notation in \cite{Ap}, ``$R$"
denotes the scalar curvature $s(\omega)$ and ``$R_{ij}R^{ij}$"
denotes the pointwise squared norm of the Ricci tensor. Namely, if
we denote by $g_{ij}$ and $r_{ij}$ the Riemannian metric $g$ and the
Ricci tensor under local \emph{real} coordinates, ``$R_{ij}R^{ij}$"
means $r_{ij}r_{pq}g^{ip}g^{jq}$, which can be shown to be exactly
twice of $|\text{Ric}(\omega)|^2$.
\end{enumerate}
\end{remark}

 Under the local coordinates $(z^1,\ldots,z^n)$, the decomposition $R=S+P+B$ has the following expression (cf. \cite[p. 86]{Bo}).
$$R_{i\bar{j}k\bar{l}}=S_{i\bar{j}k\bar{l}}+P_{i\bar{j}k\bar{l}}+B_{i\bar{j}k\bar{l}},$$
where
$$S_{i\bar{j}k\bar{l}}=\frac{s(\omega)}{2n(n+1)}(g_{i\bar{j}}g_{k\bar{l}}+g_{i\bar{l}}g_{k\bar{j}}),
$$
$$P_{i\bar{j}k\bar{l}}=\frac{1}{n+2}
(g_{i\bar{j}}\tilde{r}_{k\bar{l}}+
g_{k\bar{l}}\tilde{r}_{i\bar{j}}+g_{i\bar{l}}
\tilde{r}_{k\bar{j}}+g_{k\bar{j}}\tilde{r}_{i\bar{l}}),\qquad
\tilde{r}_{i\bar{j}}:=r_{i\bar{j}}-\frac{s(\omega)}{2n}g_{i\bar{j}}.$$

Clearly $\omega$ is a Einstein metric if and only if the traceless
$(1,1)$-form
$$\tilde{\text{R}}\text{ic}(\omega):=
\text{Ric}(\omega)-\frac{s(\omega)}{2n}\omega=\frac{\sqrt{-1}}{2\pi}
\tilde{r}_{i\bar{j}}\text{d}z_i\wedge\text{d}\bar{z_j}\equiv0.$$

\begin{lemma}\label{propnorm}
The pointwise squared norms of $\textrm{Ric}(\omega)$,
$\tilde{R}ic(\omega)$, $S$ and $P$ satisfy
\be\label{eq0}|\text{Ric}(\omega)|^2=|\tilde{\text{R}}\text{ic}(\omega)|^2+\frac{s^2(\omega)}{4n},\qquad
|S|^2=\frac{s^2(\omega)}{2n(n+1)},\qquad
|P|^2=\frac{4}{n+2}|\tilde{\text{R}}\text{ic}(\omega)|^2,\ee

where $|\tilde{\text{R}}\text{ic}(\omega)|^2:=\tilde{r}_{i\bar{j}}
\tilde{r}_{p\bar{q}}g^{i\bar{q}}g^{p\bar{j}}$ and the definitions of
$|S|^2$ and $|P|^2$ are similar to that of $|R|^2$.
\end{lemma}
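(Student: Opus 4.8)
The plan is to compute each of the three pointwise norms directly from the explicit local expressions for $S$, $P$, and the relation $r_{i\bar j} = \tilde r_{i\bar j} + \frac{s(\omega)}{2n}g_{i\bar j}$, working at a fixed point in normal coordinates so that $g_{i\bar j} = \delta_{ij}$ and all contractions become ordinary sums. First I would establish the decomposition of $|\mathrm{Ric}(\omega)|^2$. Writing $r_{i\bar j} = \tilde r_{i\bar j} + \frac{s}{2n}\delta_{ij}$ and expanding $|\mathrm{Ric}(\omega)|^2 = \sum_{i,j} r_{i\bar j}\overline{r_{i\bar j}}$, the cross terms produce a factor $\frac{s}{2n}\sum_i \tilde r_{i\bar i}$, which vanishes because $\tilde{\mathrm R}\mathrm{ic}(\omega)$ is traceless by construction; the diagonal term contributes $\frac{s^2}{4n^2}\sum_{i}1 = \frac{s^2}{4n}$. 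This gives the first identity with essentially no obstruction.

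Next I would compute $|S|^2$. From $S_{i\bar j k\bar l} = \frac{s}{2n(n+1)}(g_{i\bar j}g_{k\bar l} + g_{i\bar l}g_{k\bar j})$, the norm $|S|^2$ is $\frac{s^2}{4n^2(n+1)^2}$ times the norm-squared of the tensor $g_{i\bar j}g_{k\bar l} + g_{i\bar l}g_{k\bar j}$ under the same contraction pattern used for $|R|^2$. In normal coordinates this reduces to counting: $\sum_{i,j,k,l}(\delta_{ij}\delta_{kl} + \delta_{il}\delta_{kj})^2 = \sum (\delta_{ij}\delta_{kl})^2 + 2\sum \delta_{ij}\delta_{kl}\delta_{il}\delta_{kj} + \sum(\delta_{il}\delta_{kj})^2 = n^2 + n^2 + n^2$? — here I must be careful about the precise index placement dictated by the definition of $|R|^2$, namely $R_{i\bar j k\bar l}R_{p\bar q r\bar s}g^{i\bar q}g^{p\bar j}g^{k\bar s}g^{r\bar l}$, so that the two $S$-factors are paired as $S_{i\bar j k\bar l}\overline{S_{j\bar i l\bar k}}$ (using the Kähler symmetries $R_{i\bar j k\bar l} = R_{k\bar j i \bar l} = \overline{R_{j\bar i l\bar k}}$). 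Tracking this correctly yields the combinatorial factor $2n(n+1)$, giving $|S|^2 = \frac{s^2}{2n(n+1)}$.

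Finally, $|P|^2$ is computed the same way from $P_{i\bar j k\bar l} = \frac{1}{n+2}(g_{i\bar j}\tilde r_{k\bar l} + g_{k\bar l}\tilde r_{i\bar j} + g_{i\bar l}\tilde r_{k\bar j} + g_{k\bar j}\tilde r_{i\bar l})$. Expanding the square of this four-term sum under the curvature contraction pattern produces sixteen terms; each is a sum over indices of products of two $\tilde r$ factors contracted against Kronecker deltas, and every such contraction collapses either to $|\tilde{\mathrm R}\mathrm{ic}(\omega)|^2$ or to a trace of $\tilde{\mathrm R}\mathrm{ic}(\omega)$, which is zero. Bookkeeping the surviving terms (there should be four terms each contributing $|\tilde{\mathrm R}\mathrm{ic}(\omega)|^2$ after the symmetry pairing, against the $\frac{1}{(n+2)^2}$ prefactor, but the cross-structure with the $g$-factors changes the count) gives $|P|^2 = \frac{4}{n+2}|\tilde{\mathrm R}\mathrm{ic}(\omega)|^2$. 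The main obstacle throughout is purely organizational: keeping the conjugation and index-pairing conventions in the definition of $|R|^2$ consistent across all cross terms so that the trace-free condition on $\tilde{\mathrm R}\mathrm{ic}(\omega)$ is applied correctly and the combinatorial coefficients come out to $2n(n+1)$ and $\frac{4}{n+2}$ rather than the (incorrect) value recorded in \cite{Be}; there is no conceptual difficulty, only the need for a careful, disciplined expansion.
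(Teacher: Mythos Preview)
Your approach is correct and essentially the same as the paper's: work in a unitary frame so that $g_{i\bar j}=\delta_{ij}$, then expand directly and use the trace-free condition $\sum_i \tilde r_{i\bar i}=0$. The only difference is organizational: the paper computes $|P|^2$ by expanding just one factor of $P$ into its four summands and contracting each against the intact second factor $P_{j\bar i l\bar k}$, so that each of the four terms is seen to equal $|\tilde{\mathrm R}\mathrm{ic}(\omega)|^2$, avoiding your full sixteen-term bookkeeping.
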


\begin{proof}
The first two are well-known to experts. Since the coefficient
$4/(n+2)$ in the third one is crucial to our later use, here we give
a detailed computation for it. For simplicity we can choose a local
unitary frame field and so assume that $g_{i\bar{j}}=\delta_{ij}$.
Thus

\be\label{eq}\begin{split}|P|^2=&P_{i\bar{j}k\bar{l}}P_{j\bar{i}l\bar{k}}\\
=&\frac{1}{n+2}(\delta_{ij}\tilde{r}_{k\bar{l}}+
\delta_{kl}\tilde{r}_{i\bar{j}}+\delta_{il}
\tilde{r}_{k\bar{j}}+\delta_{kj}\tilde{r}_{i\bar{l}})P_{j\bar{i}l\bar{k}}\\
=&\frac{1}{n+2}(\delta_{ij}\tilde{r}_{k\bar{l}}P_{j\bar{i}l\bar{k}}+\text{three
other terms}).
\end{split}\ee

Note that under our assumption we have
$$\sum\tilde{r}_{i\bar{j}}\tilde{r}_{j\bar{i}}=|\tilde{\text{R}}\text{ic}(\omega)|^2\qquad
\text{and}\qquad \sum\tilde{r}_{i\bar{i}}=0.$$

Therefore,
 \be\begin{split}
\delta_{ij}\tilde{r}_{k\bar{l}}\cdot(n+2)P_{j\bar{i}l\bar{k}}&=
\delta_{ij}\delta_{ji}\tilde{r}_{k\bar{l}}\tilde{r}_{l\bar{k}}+
\delta_{ij}\delta_{lk}\tilde{r}_{k\bar{l}}\tilde{r}_{j\bar{i}}+
\delta_{ij}\delta_{jk}\tilde{r}_{k\bar{l}}\tilde{r}_{l\bar{i}}+
\delta_{ij}\delta_{li}\tilde{r}_{k\bar{l}}\tilde{r}_{j\bar{k}}\\
&=n|\tilde{\text{R}}\text{ic}(\omega)|^2+0+|\tilde{\text{R}}\text{ic}(\omega)|^2
+|\tilde{\text{R}}\text{ic}(\omega)|^2\\
&=(n+2)|\tilde{\text{R}}\text{ic}(\omega)|^2.\end{split}\nonumber\ee
The situations for the ``other three terms" in (\ref{eq}) are
similar. Thus we have
$$|P|^2=P_{i\bar{j}k\bar{l}}P_{j\bar{i}l\bar{k}}=
\frac{1}{n+2}\cdot4\cdot|\tilde{\text{R}}\text{ic}(\omega)|^2=
\frac{4}{n+2}|\tilde{\text{R}}\text{ic}(\omega)|^2.$$
\end{proof}

Now we can prove our Proposition \ref{proplowerbound} via the
following

\begin{proposition}
\be\label{integral1} c_1^2[\omega]^{n-2}=
\int_M\big[\frac{s^2(\omega)}{4n^2}-\frac{
|\tilde{\text{R}}\text{ic}(\omega)|^2}{n(n-1)}\big]\cdot\omega^n \ee
 \be\label{integral2}
 c_2[\omega]^{n-2}=\int_M\big[\frac{s^2(\omega)}{8n(n+1)}-\frac{
|\tilde{\text{R}}\text{ic}(\omega)|^2}{(n+2)(n-1)}+\frac{|B|^2}{2n(n-1)}\big]\cdot\omega^n
 \ee

 In particular,
 \be\label{integral3}
nc_1^2[\omega]^{n-2}-(n+2)c_2[\omega]^{n-2}
=\int_M\frac{s^2(\omega)}{8(n+1)}\cdot\omega^n-
\int_M\frac{n+2}{2n(n-1)}|B|^2\cdot\omega^n\ee

and thus Proposition \ref{proplowerbound} holds.
\end{proposition}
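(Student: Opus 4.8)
The plan is to derive the two cohomological identities \eqref{integral1} and \eqref{integral2} by feeding the pointwise norm relations of Lemma \ref{propnorm} into Apte's formulas \eqref{formulaapte1} and \eqref{formulaapte2}, and then to take the appropriate linear combination so that the traceless Ricci term drops out. First I would handle \eqref{integral1}: the class $c_1^2[\omega]^{n-2}$ is represented by $\mathrm{Ric}^2(\omega)\wedge\omega^{n-2}$, so integrating \eqref{formulaapte1} over $M$ gives $c_1^2[\omega]^{n-2}=\int_M\big[\tfrac{s^2(\omega)}{4}-|\mathrm{Ric}(\omega)|^2\big]\tfrac{\omega^n}{n(n-1)}$. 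Substituting $|\mathrm{Ric}(\omega)|^2=|\tilde{\mathrm{R}}\mathrm{ic}(\omega)|^2+\tfrac{s^2(\omega)}{4n}$ from \eqref{eq0} and simplifying the scalar coefficient $\tfrac{1}{n(n-1)}\big(\tfrac14-\tfrac{1}{4n}\big)=\tfrac{1}{4n^2}$ yields \eqref{integral1}.

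Next I would treat \eqref{integral2}. Integrating \eqref{formulaapte2} gives $c_2[\omega]^{n-2}=\int_M\big[\tfrac{s^2(\omega)}{4}-2|\mathrm{Ric}(\omega)|^2+|R|^2\big]\tfrac{\omega^n}{2n(n-1)}$, so the task reduces to expressing the combination $\tfrac{s^2(\omega)}{4}-2|\mathrm{Ric}(\omega)|^2+|R|^2$ in terms of $s^2(\omega)$, $|\tilde{\mathrm{R}}\mathrm{ic}(\omega)|^2$ and $|B|^2$. Here I use the $L^2$-orthogonality of the decomposition $R=S+P+B$, which gives $|R|^2=|S|^2+|P|^2+|B|^2$, together with $|S|^2=\tfrac{s^2(\omega)}{2n(n+1)}$ and $|P|^2=\tfrac{4}{n+2}|\tilde{\mathrm{R}}\mathrm{ic}(\omega)|^2$ from \eqref{eq0}, and again $|\mathrm{Ric}(\omega)|^2=|\tilde{\mathrm{R}}\mathrm{ic}(\omega)|^2+\tfrac{s^2(\omega)}{4n}$. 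Collecting the $s^2(\omega)$-coefficients, $\tfrac14-\tfrac{2}{4n}+\tfrac{1}{2n(n+1)}=\tfrac{n}{4(n+1)}$, and the $|\tilde{\mathrm{R}}\mathrm{ic}(\omega)|^2$-coefficients, $-2+\tfrac{4}{n+2}=-\tfrac{2n}{n+2}$, then dividing by $2n(n-1)$, produces exactly \eqref{integral2}. This is where correcting Besse's erroneous coefficient matters: the $\tfrac{4}{n+2}$ in $|P|^2$ is precisely what makes the traceless term come out to $-\tfrac{1}{(n+2)(n-1)}$.

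Finally, for \eqref{integral3} I would form $n\cdot\eqref{integral1}-(n+2)\cdot\eqref{integral2}$. The $|\tilde{\mathrm{R}}\mathrm{ic}(\omega)|^2$ contributions are $-\tfrac{n}{n(n-1)}+\tfrac{n+2}{(n+2)(n-1)}=-\tfrac{1}{n-1}+\tfrac{1}{n-1}=0$, so that term vanishes identically; the $s^2(\omega)$ coefficient becomes $\tfrac{n}{4n^2}-\tfrac{n+2}{8n(n+1)}=\tfrac{1}{8(n+1)}$, and the Bochner term is $-\tfrac{n+2}{2n(n-1)}|B|^2$, giving \eqref{integral3}. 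Since $|B|^2\ge 0$ pointwise, this immediately yields $\int_Ms^2(\omega)\omega^n\ge 8(n+1)\big(nc_1^2[\omega]^{n-2}-(n+2)c_2[\omega]^{n-2}\big)$, with equality if and only if $B\equiv 0$, i.e. $\omega$ is a B-K metric, which is Proposition \ref{proplowerbound}. I do not expect a genuine obstacle here: the only delicate point is bookkeeping the rational coefficients accurately—especially getting the $|P|^2$ coefficient right, which is the whole reason for the careful computation in Lemma \ref{propnorm}—and making sure the cancellation of the traceless Ricci term in \eqref{integral3} is exact, which it is precisely because of that corrected coefficient.
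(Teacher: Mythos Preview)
Your proposal is correct and follows exactly the approach of the paper's own proof, which simply says to integrate \eqref{formulaapte1} and \eqref{formulaapte2} and substitute the relations \eqref{eq0}; you have merely spelled out the coefficient bookkeeping explicitly. One small slip: the intermediate $s^2(\omega)$-coefficient you record as $\tfrac{n}{4(n+1)}$ should be $\tfrac{n-1}{4(n+1)}$, but after dividing by $2n(n-1)$ this still gives the correct $\tfrac{1}{8n(n+1)}$, so the conclusion is unaffected.
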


\begin{proof}
Integrating (\ref{formulaapte1}) and (\ref{formulaapte2}) over $M$
and using the relations (\ref{eq0}) to replace the terms
$|\text{Ric}(\omega)|^2$ and $|R|^2$ lead to (\ref{integral1}) and
(\ref{integral2}).
\end{proof}

Now we can correct the coefficient in \cite[(2.80a)]{Be} by
rewriting (\ref{integral1}) and (\ref{integral2}) as follows.
$$\frac{1}{(n-2)!}c_1^2[\omega]^{n-2}=
\int_M\big[\frac{n-1}{4n}s^2(\omega)-|\tilde{\text{R}}\text{ic}(\omega)|^2\big]\cdot\frac{\omega^n}{n!}$$
$$\frac{1}{(n-2)!}c_2[\omega]^{n-2}=\frac{1}{2}\int_M\big[\frac{n-1}{4(n+1)}s^2(\omega)
-\frac{2n}{n+2}|\tilde{\text{R}}\text{ic}(\omega)|^2+
|B|^2\big]\cdot\frac{\omega^n}{n!}$$

Note that in \cite[p. 80]{Be} the K\"{a}hler form is defined to be
$2\pi\omega$ in our notation and so there is an additional factor
$4\pi^2$. Clearly the correctness of the coefficient
$-\frac{2n}{n+2}$ before $|\tilde{\text{R}}\text{ic}(\omega)|^2$ is
crucial to establish (\ref{integral3}) and Proposition
\ref{proplowerbound}.

\subsection{On a remark of Tian}\label{section3.2}
In Chapter 2 of his lecture notes \cite{Ti}, Tian discussed Yau's
Chern number inequality along the line of the uniformization theorem
for K\"{a}hler manifolds with constant holomorphic sectional
curvature. At the end of chapter 2 (\cite[Remark 2.15]{Ti}), he
remarked that, if an $n$-dimensional compact K\"{a}hler manifold
$(M,\omega)$ has constant scalar curvature $s(\omega)$, then  we
have the following integral inequality
\be\label{Tian}c_1^2[\omega]^{n-2}-c_2[\omega]^{n-2}\leq\frac{n+2}{8n^2(n+1)}s^2(\omega)[\omega]^n,\ee

where the equality holds if and only if $\omega$ is of constant
holomorphic sectional curvature. Using our notation the K\"{a}hler
form and the scalar curvature in \cite{Ti} are defined to be
$\pi\omega$ and one half of $s(\omega)$ respectively.

Indeed, (\ref{Tian}) can be proved by using (\ref{integral1}) and
(\ref{integral2}) directly:
\be\begin{split}
c_1^2[\omega]^{n-2}-c_2[\omega]^{n-2}&=\int_M\big[\frac{n+2}
{8n^2(n+1)}s^2(\omega)-\frac{2 }{n(n+2)(n-1)}|\tilde{\text{R}}
\text{ic}(\omega)|^2-\frac{|B|^2}{2n(n-1)}\big]\omega^n\\
&\leq\int_M\frac{n+2}
{8n^2(n+1)}s^2(\omega)\omega^n\\
&=\frac{n+2}{8n^2(n+1)}s^2(\omega)[\omega]^n,\qquad(\text{$\omega$
is cscK})\end{split}\nonumber\ee

where the equality holds if and only if $P=B=0$ and so $\omega$ has
constant holomorphic sectional curvature.

Under the assumption that $\omega$ be a cscK metric, we can compare
(\ref{inequalityobstruction}) and (\ref{Tian}) by rewriting
(\ref{inequalityobstruction}) as follows. \be\label{li}\begin{split}
c_1^2[\omega]^{n-2}-c_2[\omega]^{n-2}&\leq\frac{n(c_1[\omega]^{n-1})^2}
{2(n+1)[\omega]^n}+\frac{2}{n}c_2[\omega]^{n-2}\\
&=\frac{1}{8n(n+1)}s^2(\omega)[\omega]^n+\frac{2}{n}c_2[\omega]^{n-2}.\end{split}\ee

Now the difference of the upper bounds in (\ref{li}) and
(\ref{Tian}) is \be\begin{split}
&\frac{2}{n}c_2[\omega]^{n-2}-\frac{2}{8n^2(n+1)}s^2(\omega)[\omega]^n\\
=&\frac{2}{n}\big\{c_2[\omega]^{n-2}-\frac{1}{8n(n+1)}s^2(\omega)[\omega]^n\big\}\\
=&\int_M\big[-\frac{2
|\tilde{\text{R}}\text{ic}(\omega)|^2}{n(n+2)(n-1)}+\frac{|B|^2}{n^2(n-1)}\big]\cdot\omega^n\qquad\big(\text{by
(\ref{integral2})}\big),
\end{split}\nonumber\ee

whose sign can be either negative or positive.
\bibliographystyle{amsplain}

\begin{thebibliography}{10}
\bibitem{Ap}
{M. Apte}:
\newblock {\em Sur certaines classes caract$\acute{e}$ristiques des vari$\acute{e}$t
$\acute{e}$s K\"{a}hl$\acute{e}$riennes compactes},
\newblock  C. R. Acad. Sci. Paris {\bf 240} (1955), 149-151.

\bibitem{Ba}
{V. Batyrev}:
\newblock {\em Boundedness of the degree of multidimensional toric Fano varieties},
\newblock  Vestnik Moskov. Univ. Ser. I Mat. Mekh. (1982), no. 1, 22¨C27,
76¨C77; English translation: Moscow Univ. Math. Bull. {\bf 37}
(1982), 28¨C33.

\bibitem{Be}
{A.L. Besse}:
\newblock {\em Einstein manifolds},
\newblock  Ergebnisse der Math., Springer-Verlag, Berlin-New York,
1987.

\bibitem{Bo}
{S. Bochner}:
\newblock {\em Curvature and Betti numbers, II},
\newblock  Ann. Math. {\bf 50} (1949), 77-93.

\bibitem{Br}
{R. Bryant}:
\newblock {\em Bochner-K\"{a}hler metrics},
\newblock  J. Amer. Math. Soc. {\bf 14} (2001), 623-715.

\bibitem{Ca}
{E. Calabi}:
\newblock {\em Extremal K\"{a}hler metrics},
\newblock Seminar on Differential Geometry, pp. 259-290,
Ann. of Math. Stud., 102, Princeton Univ. Press, Princeton, N.J.,
1982.

\bibitem{Ch}
{X. Chen}:
\newblock {\em Space of K\"{a}hler metrics. III },
\newblock  Invent. Math. {\bf 175} (2009), 453-503.

\bibitem{De}
{O. Debarre}:
\newblock {\em Higher-dimensional algebraic geometry},
\newblock Universitext, Springer-Verlag, New York, 2001.

\bibitem{Do}
{S.K. Donaldson}:
\newblock {\em Lower bounds on the Calabi functional },
\newblock  J. Differential Geom. {\bf 70} (2005), 453-472.



\bibitem{Ka}
{Y. Kamishima}:
\newblock {\em Uniformization of K\"{a}hler manifolds with vanishing Bochner tensor},
\newblock  Acta. Math. {\bf 172} (1994), 299-308.

\bibitem{ML}
{A. Lichnerowicz}:
\newblock {\em Sur les transformations analytiques des
vari\'{e}t\'{e}s K\"{a}hl\'{e}rinnes compactes},
\newblock  C. R. Acad. Sci. Paris, {\bf 244} (1957), 3011-3014.

\bibitem{Ma}
{M. Matsumoto}:
\newblock {\em On K\"{a}hler spaces with parallel or vanishing Bochner curvature tensor},
\newblock  Tensor (N.S.) {\bf 20} (1969), 25-28.


\bibitem{Ti}
{G. Tian}:
\newblock{\em Canonical metrics in K\"{a}hler geometry},
\newblock Notes taken by Meike Akveld, Lectures in Mathematics ETH Z\"{u}rich. Birkh\"{a}user Verlag, Basel,
2000.

\bibitem{yau}
{S.-T. Yau}:
\newblock {\em Calabi's conjecture and some new results in algebraic geometry},
\newblock  Proc. Natl. Acad. Sci. USA, {\bf 74} (1977), 1798-1799.

\bibitem{Zh}
{F. Zheng}:
\newblock {\em Complex differential geometry},
\newblock  AMS/IP Studies in Advanced Mathematics 18, American Mathematical Society, Providence, RI 2000.
\end{thebibliography}

\end{document}